\numberwithin{figure}{section}
\numberwithin{equation}{section}
\numberwithin{table}{section}
\numberwithin{theorem}{section}
\newcommand{\ee}{{\mathrm e}}
\newcommand{\ii}{{\mathrm i}}
\newcommand{\LT}{\mathrm{LT}}
\newcommand{\PLT}{\mathrm{PLT}}
\newcommand{\St}{\mathrm{S}}
\date{}
\begin{document}
\thispagestyle{empty}

\title{An elementary approach to splittings of unbounded operators}

\author{Arieh Iserles \& Karolina Kropielnicka}

\maketitle


\begin{abstract}
  Using elementary means, we derive the three most popular splittings of $\ee^{t(A+B)}$ and their error bounds in the case when $A$ and $B$ are (possibly unbounded) operators  in a Hilbert space, generating strongly continuous semigroups, $\ee^{tA}$, $\ee^{tB}$ and $\ee^{t(A+B)}$.  The error of these splittings is bounded in terms of the norm of the commutators $[A,B]$, $[A,[A,B]]$ and $[B,[A,B]]$.
\end{abstract}

\section{Introduction}

A popular approach in numerical analysis is to approximate a problem that we wish to compute by a combination of sub-problems whose solution is more accessible  \cite{blanes24smd,mclachlan02sm}. This leads to powerful computational algorithms but mathematical analysis of this procedure is often highly nontrivial.

Exponential splittings arise when we attempt to solve a complicated linear differential system whose `sub-problems' are much easier to compute. They  play a prominent role in a wide range of computational methods for evolutionary differential equations. Yet,  the analysis of even the simplest splitting presents a substantial problem in the context of partial differential equations. 

The starting point is the abstract linear equation,
\begin{equation}
  \label{eq:1.1}
  \frac{\D u}{\D t}=(A+B)u,\quad t\geq0,\qquad u(0)=u_0,
\end{equation}
where, for simplicity's sake, we assume zero boundary conditions. Here $A$ and $B$ are linear differential operators, which might be unbounded. We stipulate that the solution of \R{eq:1.1} is {\em well posed:\/} for every $t\geq0$ there exists $c_t$ such that $\|u(t)\|\leq c_t \|u_0\|$ -- in other words, small perturbations in the initial condition lead to bounded changes in the solution for all $t\geq0$.

We denote the exact solution of \R{eq:1.1}  by $u(t)=\ee^{t(A+B)}u_0$: if $A$ and $B$ are matrices, the exponential can be (at least in principle) computed by a Taylor expansion but this is no longer the case once $A$ and $B$ are unbounded linear operators: we cannot  regard them as  `very large matrices'. However, in many instances computing both $\ee^{tA}u_0$ (the solution of $\D u/\D t=Au$, $u(0)=u_0$)   and $\ee^{tB}u_0$ is easy (whether exactly or to a high degree of precision), hence  $\ee^{t(A+B)}$ (for small $t>0$ -- the numerical procedure is typically accomplished by time stepping and $t$ is the step size) might be approximated by the {\em Lie--Trotter splitting\/}
\begin{equation}
  \label{eq:1.2}
  Y_{\LT}(A,B;t):=\ee^{tA}\ee^{tB}.
\end{equation}
If $A$ and $B$ commute, that is $[A,B]=AB-BA=O$, it is true that $Y_{\LT}(A,B;t)=\ee^{t(A+B)}$, otherwise $Y_{\LT}(A,B;t)\neq\ee^{t(A+B)}$ but $Y_{\LT}$ might still be a good approximation to the exact exponential, thus to the solution of \R{eq:1.1}. 

As long as \R{eq:1.1} is an ordinary differential equation, everything reduces to matrix analysis and it is trivial to expand the error into Taylor series, proving that $Y_{\LT}(A,B;t)-\ee^{t(A+B)}=\O{t^2}$: Lie--Trotter is hence a first-order approximation. Not so, however, once either $A$ or $B$ are unbounded operators. 

A helpful paradigm is the {\em linear Schr\"odinger equation\/}
\begin{equation}
  \label{eq:1.3}
  \ii \frac{\partial u}{\partial t}=-\Delta u+V(x)u,\qquad t\geq0, \quad x\in\mathbb{R}^d,
\end{equation}
with an initial condition given in a suitable Hilbert space ${\mathcal X}\subseteq\CC{L}_2(\mathbb{R}^d)$. Thus, $A=\ii\Delta$ and $B=-\ii V$ and we note that the differential operator $A$, which corresponds to the {\em kinetic component\/} of a quantum system, is unbounded. In some cases the {\em interaction potential\/} $V$ might be also unbounded. 

The operators $A+B$, $A$ and $B$ are generators of strongly continuous semigroups $\ee^{t(A+B)}$, $\ee^{tA}$ and $\ee^{t B}$ respectively\footnote{Strong continuity of $\ee^{t(A+B)}$ implies that $\|\ee^{t(A+B)}\|\leq c\ee^{\omega t}$ for some $c,\omega>0$ and all $t\geq0$, meaning that the operator $A+B$ is well posed for the problem \R{eq:1.1} \cite{pazy83slo}.}  hence {\em we are not allowed to use a Taylor expansion in the presence of unbounded operators,\/} cf.\  \cite{pazy83slo}. Therefore the standard, trivial proof that $Y_{\LT}$ is a first-order splitting -- that is, that $Y_{\LT}(A,B;t)u_0=\ee^{t(A+B)}u_0+\O{t^2}$ for every $u_0\in\mathcal{X}$ -- is no longer valid! 

The subject of this paper is detailed analysis of three most familiar splittings: the Lie--Trotter splitting \R{eq:1.2}, the {\em palindromic Lie--Trotter splitting\/}
\begin{equation}
  \label{eq:1.4}
  Y_{\PLT}(A,B;t):=\frac12 (\ee^{tA}\ee^{tB}+\ee^{tB}\ee^{tA})=\frac12 [Y_{\LT}(A,B;t)+Y_{\LT}(B,A;t)]
\end{equation}
(in this context `palindromic' means that it is the same whether `read' left-to-right or right-to-left) and the {\em Strang splitting\/} 
\begin{equation}
  \label{eq:1.5}
  Y_{\St}(A,B;t):=\ee^{\frac12 tA}\ee^{tB}\ee^{\frac12 tA}=Y_{\LT}(A,B;\tfrac12 t) Y_{\LT}(B,A;\tfrac12 t)
\end{equation}
\cite{mclachlan02sm}. It is tempting to consider palindromic Lie--Trotter as an arithmetic mean of Lie--Trotter and its `reverse' $Y_{\LT}(B,A;t)=\ee^{tB}\ee^{tA}$ and Strang's splitting as their geometric mean, but we make no further use of this observation.

An interesting aside: the Strang splitting is {\em symmetric,\/} $Y_{\St}^{-1}(A,B;t)=Y_{\St}(A,B,-t)$. This is an easy exercise, an outcome of palindromy and in finite-dimensional or bounded setting implies that there exists an operator $\Omega$ such that $Y_{\St}(A,B;t)=\ee^{\Omega(t)}$ and $\Omega(-t)=-\Omega(t)$ \cite{hairer06gni,mclachlan02sm}.\footnote{The standard means of constructing $\Omega$ is the symmetric Baker--Campbell--Hausdorff formula \cite{iserles00lgm}.} An important consequence is that its order can be increased using the Yoshida device \cite{yoshida90cho}. This, however is not the case with palindromic Lie--Trotter because it is easy to verify that it is not symmetric, even though it is palindromic.

An analysis of the Strang splitting for unbounded operators has been presented in the path-breaking paper of \citeasnoun{jahnke00ebe}. They assumed $B$ to be bounded, $\|\ee^{tA}\|,\|\ee^{tB}\|,\|\ee^{t(A+B)}\|\leq1$ for $t\geq0$ and the existence of $\alpha,\beta>0$ such that
\begin{displaymath}
  \|[A,B]u\|\leq c_1\|(-A)^\alpha u\|,\qquad \|[A,[A,B]]u\|\leq c_2\|(-A)^\beta u\|,\qquad u\in\mathcal{X},
\end{displaymath}
for some $c_1,c_2>0$ (all this is true for the linear Schr\"odinger equation \R{eq:1.3}), proved that \R{eq:1.5} is of order two in this setting,
\begin{displaymath}
  Y_{\St}(A,B;t)u=\ee^{t(A+B)}u+\O{t^3},\qquad u\in\mathcal{X},
\end{displaymath}
and presented an error bound. Cf.\ \cite{hansen09esu} for the case when time-independent $A$ and $B$ are both unbounded. An important elaboration upon the model \R{eq:1.1} is when $B$ is allowed to depend on $t$, and this can be accommodated into the formalism of exponential splittings. We refer to \cite{jahnke00ebe} and \cite{delvalle03fst} for details.

We assume for the time being just that the semigroups $\ee^{tA},\ee^{tB}$ and $\ee^{t(A+B)}$ are well posed in ${\mathcal X}$: this is sufficient in our quest to express in Section~2 the errors of different splittings in an accessible form. The next step (and the theme of Section~3) is to use these forms to derive explicit upper bounds on the error and this requires stricter assumptions on functions $u\in\mathcal{X}$, e.g.\ that $\|[A,B]u\|$, $\|[[A,B],A]u\|$ and $\|[[A,B],B]u\|$ are all bounded.

In Section~2 we derive explicit expressions for the error of Lie--Trotter, palindromic Lie--Trotter and Strang splittings in terms of multiple integrals, thereby determining their order. Firstly we derive the error
\begin{displaymath}
  E_{\LT}(A,B;t)=Y_{\LT}(A,B;t)-\ee^{t(A+B)}
\end{displaymath}
from basic principles. Secondly, we represent
\begin{displaymath}
  E_{\PLT}(A,B;t)=Y_{\PLT}(A,B;t)-\ee^{t(A+B)}
\end{displaymath}
in terms of $E_{\LT}(A,B;t)$ and $E_{\LT}(B,A;t)$ and, finally, we represent
\begin{displaymath}
  E_{\St}(A,B;t)=Y_{\St}(A,B;t)-\ee^{t(A+B)}
\end{displaymath}
using nested integrals.

Once $A$ and $B$ are matrices, trivial Taylor expansion affirms that
\begin{Eqnarray}
  \label{eq:1.6}
  E_{\LT}(A,B;t)&=&\frac12[A,B]t^2+\O{t^3},\\
  \label{eq:1.7}
  E_{\PLT}(A,B;t)&=&\frac{1}{12}[A-B,[A,B]]t^3+\O{t^4},\\
  \label{eq:1.8}
  E_{\St}(A,B;t)&=&-\frac{1}{24}[A+2B,[A,B]]t^3+\O{t^4}.
\end{Eqnarray}

The rules of the game in this paper are simple: {\em we are not allowed to expand into Taylor series and, once we seek error bounds, must be careful with the presence of commutators containing unbounded operators.\/} Because of possible unboundedness, such commutators might make sense in upper bounds only for a subset of initial values in $\mathcal{X}$.

While the  expressions from Section~2 are sufficient to obtain order, they are perhaps too opaque as realistic tools to estimate the error. In Section~3 we use logarithmic norms \cite{soderlind2006tln} to produce tight upper bounds on the error committed by our three splittings.

Note that both Lie--Trotter and Strang splitting have their reversed counterparts (i.e., with $A$ and $B$ swapped), $\ee^{tB}\ee^{tA}$ and $\ee^{\frac12 tB}\ee^{tA}\ee^{\frac12 tB}$ respectively.  (Palindromic Lie--Trotter is its own reverse.) All we need, though, to extend our results to the `reverse setting' is to swap the role of $A$ and $B$ in our error terms and corresponding bounds.

The main tool in our analysis consists of demonstrating that each of our three splittings obeys a perturbed differential equation, whose solution affords us a handle on the size of the error. The use of a perturbed differential equation satisfied by a splitting has already featured in  \cite{sheng94gee} in a finite-dimensional setting. The framework of the present paper is substantially different.

\section{Error expressions}

\subsection{The Lie--Trotter splitting}

Throughout this paper derivatives of operators are always applied to the elements of the Hilbert space $\mathcal{X}$ but, for the sake of brevity, we consider them as `proper' derivatives. Thus, for example, $\D \ee^{tA}/\D t=A\ee^{tA}$ really means that $(\D \ee^{tA}/\D t )u=A\ee^{tA}u$ for every $u\in\mathcal{X}$ (or for a subset of $\mathcal{X}$).

It is easy to derive the following result from basic principles,

\begin{theorem}
  \label{th:LieTrotter}
  The error of the Lie--Trotter splitting is
  \begin{equation}
    \label{eq:2.1}
    E_{\LT}(A,B;t)=\int_0^t \ee^{(t-\tau)(A+B)} \int_0^\tau \ee^{(\tau-\xi)A}[A,B]\ee^{\xi A}\D\xi \ee^{\tau B}\D\tau.
  \end{equation}
\end{theorem}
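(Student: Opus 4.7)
The plan is to derive a perturbed differential equation satisfied by $Y_{\LT}(A,B;\tau)$ in the variable $\tau$, then recover the error formula by the variation-of-constants formula applied to the semigroup $\ee^{\tau(A+B)}$. All manipulations will be understood pointwise on a suitable dense subset of $\mathcal{X}$, in line with the convention stated at the start of Section~2.

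First I would differentiate the product $Y_{\LT}(A,B;\tau)=\ee^{\tau A}\ee^{\tau B}$ in $\tau$ and rewrite the result as
\begin{equation*}
\frac{\D}{\D\tau}Y_{\LT}(A,B;\tau)=(A+B)Y_{\LT}(A,B;\tau)+[\ee^{\tau A},B]\ee^{\tau B},
\end{equation*}
where $[\ee^{\tau A},B]=\ee^{\tau A}B-B\ee^{\tau A}$ is the obstruction that vanishes precisely when $A$ and $B$ commute. The point of this reformulation is to separate the `unperturbed' generator $A+B$ from a forcing term that is small (of order $\tau$) and, crucially, can be written in terms of $[A,B]$ rather than $A$ and $B$ separately.

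The next step is to express $[\ee^{\tau A},B]$ as an integral of $[A,B]$. Consider the auxiliary function $\xi\mapsto \ee^{(\tau-\xi)A}B\ee^{\xi A}$; differentiating in $\xi$ yields $-\ee^{(\tau-\xi)A}[A,B]\ee^{\xi A}$, so integrating from $0$ to $\tau$ produces
\begin{equation*}
[\ee^{\tau A},B]=\int_0^\tau \ee^{(\tau-\xi)A}[A,B]\ee^{\xi A}\,\D\xi.
\end{equation*}
Substituting this into the perturbed equation gives
\begin{equation*}
\frac{\D}{\D\tau}Y_{\LT}(A,B;\tau)=(A+B)Y_{\LT}(A,B;\tau)+\left(\int_0^\tau \ee^{(\tau-\xi)A}[A,B]\ee^{\xi A}\,\D\xi\right)\ee^{\tau B}.
\end{equation*}

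Finally, viewing this as an inhomogeneous abstract Cauchy problem with generator $A+B$, initial datum $Y_{\LT}(A,B;0)=I$ and forcing term equal to the bracketed quantity, I would apply Duhamel's formula (the variation-of-constants formula for strongly continuous semigroups, cf.\ \cite{pazy83slo}). Subtracting $\ee^{t(A+B)}$ from both sides then yields the claimed expression \R{eq:2.1} for $E_{\LT}(A,B;t)$.

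The main obstacle is not algebraic but functional-analytic: all the identities above involve unbounded operators, so one must check that $[A,B]\ee^{\xi A}u$, $\ee^{\tau B}u$ etc.\ belong to the relevant domains and that the derivatives and integrals may be interchanged. Following the convention announced at the start of Section~2, I would interpret the identity as holding for $u$ in a dense subset of $\mathcal{X}$ on which all the requisite commutators and semigroup actions are well defined, and then appeal to density and the strong continuity bound $\|\ee^{t(A+B)}\|\leq c\ee^{\omega t}$ to extend the final integral representation to all $u\in\mathcal{X}$ for which the right-hand side makes sense.
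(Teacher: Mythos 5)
Your proposal is correct and follows essentially the same route as the paper: differentiate $Y_{\LT}$ to obtain the perturbed equation with forcing term $[\ee^{\tau A},B]\ee^{\tau B}$, apply the Duhamel (variation-of-constants) formula with generator $A+B$, and replace $[\ee^{\tau A},B]$ by the nested integral of $[A,B]$. The only cosmetic difference is that you obtain that commutator identity by integrating the derivative of $\xi\mapsto\ee^{(\tau-\xi)A}B\ee^{\xi A}$ directly, whereas the paper gets the same formula via a second application of Duhamel to $Z(\tau)=[\ee^{\tau A},B]$.
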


\begin{proof}
  By straightforward differentiation,
  \begin{equation}
    \label{eq:2.X}
    \frac{\D Y_{\LT}(A,B;t)}{\D t}-(A+B)Y_{\LT}(A,B;t)=[\ee^{tA},B]\ee^{t B}
  \end{equation}
  and, of course, $Y_{\LT}(A,B;0)=I$. Recall that the {\em Duhamel principle\/} is the familiar variation of constants formula in the context of partial differential equations: the solution of $X'=CX+F(t)$, where $C$ might be a matrix or an operator, is
  \begin{displaymath}
    X(t)=\ee^{tC}X(0)+\int_0^t \ee^{(t-\tau)C} F(\tau)\D\tau.
  \end{displaymath}
  Letting $C=A+B$ and $F(t)=[\ee^{tA},B]\ee^{tB}$ in \R{eq:2.X} implies that $X(t)=Y_{\LT}(A,B;t)$ and we have
  \begin{displaymath}
    E_{\LT}(A,B;t)=Y_{\LT}(A,B;t)-\ee^{t(A+B)}=\int_0^t \ee^{(t-\tau)(A+B)} [\ee^{\tau A},B]\ee^{\tau B}\D\tau.
  \end{displaymath}
  Letting $Z(\tau)=[\ee^{\tau A},B]$, we obtain
  \begin{displaymath}
    \frac{\D Z(\tau)}{\D\tau}-AZ(\tau)=[A,B]\ee^{\tau A},\qquad Z(0)=O,
  \end{displaymath}
  and using the Duhamel principle again
  \begin{displaymath}
    Z(\tau)=\int_0^\tau \ee^{(\tau-\xi)A}[A,B]\ee^{\xi A}\D\xi.
 \end{displaymath}
 The error expression \R{eq:2.1} now follows by substitution into $E_{\LT}$.
\end{proof}

The presence of two nested integrals in \R{eq:2.1} indicates that $E_{\LT}(A,B;t)=\O{t^2}$ and the order of the Lie--Trotter splitting is one. 

It is sometimes, e.g.\ in the next section, useful to rewrite \R{eq:2.1} in a form that separates between the leading $\O{t^2}$ term and a higher-order one. Let 
 \begin{displaymath}
   U(\tau)=\int_0^\tau \ee^{-\xi A}[A,B]\ee^{\xi A}\D\xi,
 \end{displaymath}
 hence \R{eq:2.1} equals
 \begin{displaymath}
   E_{\LT}(A,B;t)=\int_0^t \ee^{(t-\tau)(A+B)} \ee^{\tau A} U(\tau)\ee^{\tau B}\D\tau.
 \end{displaymath}
 Given any  operator $Q$ acting on $\mathcal{X}$, we let $P_Q(A,B;\xi)=\ee^{-\xi A}Q\ee^{\xi A}$. Therefore
 \begin{displaymath}
  P_Q(A,B;0)=Q,\qquad P_Q'(A,B;\xi)=\ee^{-\xi A}[Q,A]\ee^{\xi A}=-\ee^{-\xi A}[A,Q]\ee^{\xi A}
\end{displaymath} 
and
 \begin{equation}
   \label{eq:2.2}
   P_Q(A,B;\xi)=Q-\int_0^\xi \ee^{-\eta A}[A,Q]\ee^{\eta A}\D\eta
 \end{equation}
 -- the first term on the right is $\O{1}$ and the second $\O{\xi}$. Letting $Q=[A,B]$ we have
 \begin{Eqnarray}
   \nonumber
   E_{\LT}(A,B;t)&=&\int_0^t\ee^{(t-\tau)(A+B)}\ee^{\tau A} \int_0^\tau P_{[A,B]}(A,B;\xi)\D\xi \ee^{\tau B}\D\tau\\
   \nonumber
   &=&\int_0^t \ee^{(t-\tau)(A+B)}\ee^{\tau A}\! \left\{ \tau [A,B]\right.\\
   \nonumber
   &&\left.\mbox{}-\int_0^\tau\!\int_0^\xi \ee^{-\eta A}[A,[A,B]]\ee^{\eta A}\!\D\eta\D\xi\right\}\!\ee^{\tau B}\D\tau \\
   \label{eq:2.3}
   &=&\int_0^t \tau \ee^{(t-\tau)(A+B)}\ee^{\tau A}[A,B]\ee^{\tau B}\D\tau\\
   \nonumber
   &&\mbox{}-\int_0^t \ee^{(t-\tau)(A+B)}\ \int_0^\tau (\tau-\eta)\ee^{(\tau-\eta) A}[A,[A,B]]\ee^{\eta A}\D\eta\, \ee^{\tau B}\D\tau.
 \end{Eqnarray}
 Note that the first integral in \R{eq:2.3} is $\O{t^2}$, while the second is $\O{t^3}$.

\subsection{Palindromic Lie--Trotter}

\begin{theorem}
  \label{th:PLT}
  The error of the palindromic Lie--Trotter splitting is
  \begin{Eqnarray}
     \label{eq:2.4}
    E_{\PLT}(A,B;t)&=&\frac12 \int_0^t \tau \ee^{(t-\tau)(A+B)} \left\{ \ee^{\tau A}[A,B]\ee^{\tau B}-\ee^{\tau B}[A,B]\ee^{\tau A}\right\}\!\D\tau \hspace*{30pt}\\
    \nonumber
    &&\mbox{}-\frac12 \int_0^t \ee^{(t-\tau)(A+B)} \int_0^\tau (\tau-\xi)\left\{ \ee^{(\tau-\xi)A}[A,[A,B]]\ee^{\xi A}\ee^{\tau B} \right.\\
    \nonumber
    &&\hspace*{20pt}\left.\mbox{}-\ee^{(\tau-\xi)B}[B,[A,B]]\ee^{\xi B}\ee^{\tau A}\right\}\!\D\xi\D\tau.
  \end{Eqnarray}
  Moreover, since both terms are $\O{t^3}$, the splitting is of order two.
\end{theorem}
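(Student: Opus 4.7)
My plan is to derive \R{eq:2.4} by combining the refined Lie--Trotter expression \R{eq:2.3} with its reverse, and then read off the order from the integration structure.

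\textbf{Step 1: reduce to $E_{\LT}$.} From the definition \R{eq:1.4} and the linearity of the exponential term $\ee^{t(A+B)}=\ee^{t(B+A)}$, I would immediately write
\begin{displaymath}
  E_{\PLT}(A,B;t)=\tfrac12\bigl[E_{\LT}(A,B;t)+E_{\LT}(B,A;t)\bigr].
\end{displaymath}
This is the backbone of the proof and reduces the problem to manipulating two instances of a formula already established in Theorem~\ref{th:LieTrotter}.

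\textbf{Step 2: apply \R{eq:2.3} to each Lie--Trotter error.} Substituting the identity \R{eq:2.3} for $E_{\LT}(A,B;t)$ and its $A\leftrightarrow B$ swap for $E_{\LT}(B,A;t)$, and using the elementary identities $[B,A]=-[A,B]$ and $[B,[B,A]]=-[B,[A,B]]$, the two `leading' ($\tau$-weighted) integrals combine into the antisymmetric bracket $\ee^{\tau A}[A,B]\ee^{\tau B}-\ee^{\tau B}[A,B]\ee^{\tau A}$, while the two nested triple integrals combine into the second curly-bracket term of \R{eq:2.4}. A common $\ee^{(t-\tau)(A+B)}$ factors out on the left of each piece, which yields \R{eq:2.4} after multiplication by $\tfrac12$.

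\textbf{Step 3: establish order two.} The second (triple) integral in \R{eq:2.4} inherits the factor $(\tau-\xi)$ under the inner integral over $[0,\tau]$, so the inner integral contributes a factor of size $\tau^2$; the outer integral over $[0,t]$ then pushes the whole term to $\O{t^3}$. The more delicate term is the first integral, where the explicit prefactor $\tau$ alone yields only $\O{t^2}$. The gain of one more power comes from the fact that the bracket
\begin{displaymath}
  H(\tau):=\ee^{\tau A}[A,B]\ee^{\tau B}-\ee^{\tau B}[A,B]\ee^{\tau A}
\end{displaymath}
vanishes at $\tau=0$. Without resorting to Taylor series, I would use the fundamental theorem of calculus in the form $H(\tau)=\int_0^\tau H'(\sigma)\D\sigma$, where $H'(\sigma)$ is a sum of four terms of the shape $A\ee^{\sigma A}[A,B]\ee^{\sigma B}$, $\ee^{\sigma A}[A,B]B\ee^{\sigma B}$ and their swaps, each bounded on suitable $u\in\mathcal{X}$. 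This exhibits $H(\tau)$ as $\O{\tau}$, so the first integrand of \R{eq:2.4} is $\O{\tau^2}$ and its integral is $\O{t^3}$. Hence $E_{\PLT}=\O{t^3}$ and the splitting is of order two.

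\textbf{Main obstacle.} The only subtle point is justifying $H(\tau)=\O{\tau}$ without Taylor expansion, since in the unbounded setting the derivative identity above must be interpreted as acting on vectors in the domain of the relevant commutators and generators; on the level of integral identities (rather than bounds), however, Step~2 is essentially a bookkeeping exercise. The sharper quantitative bounds would be deferred to Section~3.
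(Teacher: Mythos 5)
Your proof is correct and follows essentially the same route as the paper: the paper's proof is precisely your Steps 1--2, writing $E_{\PLT}(A,B;t)=\frac12\{E_{\LT}(A,B;t)+E_{\LT}(B,A;t)\}$ and substituting \R{eq:2.3} together with $[B,A]=-[A,B]$ and $[B,[B,A]]=-[B,[A,B]]$. The only divergence is Step 3, where you justify $H(\tau)=\O{\tau}$ by the fundamental theorem of calculus, which produces terms with bare unbounded factors such as $A\ee^{\sigma A}[A,B]\ee^{\sigma B}$; the paper instead (in the discussion after the theorem) applies the Duhamel principle with generator $A+B$ to $R_{[A,B]}=H$ and reduces further so that only commutators survive --- immaterial for the order claim on suitable $u$, but it is the reason the paper rewrites \R{eq:2.4} as \R{eq:2.5} before deriving the bounds of Section~3.
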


\begin{proof}
  It is obvious that
  \begin{Eqnarray*}
    E_{\PLT}(A,B;t)=\frac12 \left\{ E_{\LT}(A,B;t)+E_{\LT}(B,A;t)\right\}
  \end{Eqnarray*}
  and the theorem follows from \R{eq:2.3} by standard algebra, taking the average of $E_{\LT}(A,B;t)$ and $E_{\LT}(B,A;t)$.
\end{proof}

The expression \R{eq:2.4} is inconvenient for the derivation of error bounds -- the subject of the next section -- because the norm of the first integral cannot be bounded by an expression of the form $ct^3$ for some $c>0$. It is thus convenient to rewrite it. Let $Q$ be an operator acting on $\mathcal{X}$ or its subspace and define
\begin{displaymath}
  R_Q(\tau)=\ee^{\tau A}Q\ee^{\tau B}-\ee^{\tau B}Q\ee^{\tau A}
\end{displaymath}
-- note that the term in curly brackets in the first integral in \R{eq:2.4} is $R_{[A,B]}(\tau)$. Since $R_Q(0)=O$ and
\begin{displaymath}
  R_Q'(\tau)-(A+B)R_Q(\tau)=[\ee^{\tau A}Q,B]\ee^{\tau B}+[A,\ee^{\tau B}Q]\ee^{\tau A},
\end{displaymath}
 by the Duhamel formula
\begin{displaymath}
  R_Q(\tau)=\int_0^\tau \ee^{(\tau-\xi)(A+B)}\! \left\{[\ee^{\xi A}Q,B]\ee^{\xi B}+[A,\ee^{\xi B}Q]\ee^{\xi A}\right\}\!\D\xi=\O{\tau}.
\end{displaymath}
We are not done yet: Letting $F_Q(\xi)=[\ee^{\xi A}Q,B]$, we have $F_Q(0)=-[B,Q]$ and $F_Q'-AF_Q=[A,B]\ee^{\xi A}Q$, therefore
\begin{displaymath}
  F_Q(\xi)=-\ee^{\xi A}[B,Q]+\int_0^\xi \ee^{(\xi-\eta)A}[A,B]\ee^{\eta A}\D\eta \,Q.
\end{displaymath}
Likewise,
\begin{displaymath}
  [A,\ee^{\xi B}Q]=\ee^{\xi B}[A,Q]+\int_0^\xi \ee^{(\xi-\eta)B}[A,B]\ee^{\eta B}\D\eta\, Q
\end{displaymath}
and, after straightforward algebra,
\begin{Eqnarray*}
  R_{[A,B]}(\tau)&=&\int_0^\tau \ee^{(\tau-\xi)(A+B)}\!\left\{ -\ee^{\xi A}[B,[A,B]]\ee^{\xi B}+\ee^{\xi B}[A,[A,B]]\ee^{\xi A}\right\}\!\D\xi\\
  &&\mbox{}+\int_0^\tau \ee^{(\tau-\xi)(A+B)} \int_0^\xi \!\left\{ \ee^{(\xi-\eta)A}[A,B]\ee^{\eta A}\D\eta [A,B]\ee^{\xi B}\right.\\
  &&\hspace*{30pt}\left.\mbox{} +\ee^{(\xi-\eta)B}[A,B]\ee^{\eta B}\D\eta [A,B]\ee^{\xi A}\right\}\!\D\eta.
\end{Eqnarray*}

We can now rewrite \R{eq:2.3} in a form which is more amenable for an error bound. Since
\begin{Eqnarray*}
  &&\frac12 \int_0^t \tau \ee^{(t-\tau)(A+B)} \left\{ \ee^{\tau A}[A,B]\ee^{\tau B}-\ee^{\tau B}[A,B]\ee^{\tau A}\right\}\!\D\tau\\
  &=&\frac12 \int_0^t \tau \ee^{(t-\tau)(A+B)} R_{[A,B]}(\tau)\D\tau\\
  &=&\frac12 \int_0^t \tau \ee^{(t-\tau)(A+B)} \int_0^\tau \ee^{(\tau-\xi)(A+B)}\!\left\{ -\ee^{\xi A}[B,[A,B]]\ee^{\xi B}+\ee^{\xi B}[A,[A,B]]\ee^{\xi A}\right\}\!\D\xi\\
  &&\mbox{}+\frac12 \int_0^t \tau \ee^{(t-\tau)(A+B)}\int_0^\tau \ee^{(\tau-\xi)(A+B)} \int_0^\xi \!\left\{ \ee^{(\xi-\eta)A}[A,B]\ee^{\eta A}\D\eta [A,B]\ee^{\xi B}\right.\\
  &&\hspace*{30pt}\left.\mbox{} +\ee^{(\xi-\eta)B}[A,B]\ee^{\eta B}\D\eta [A,B]\ee^{\xi A}\right\}\!\D\eta\D\tau\\
  &=&\frac14 \int_0^t (t^2-\xi^2)\ee^{(t-\xi)(A+B)}\! \left\{ \ee^{\xi B}[A,[A,B]]\ee^{\xi A}-\ee^{\xi A}[B,[A,B]]\ee^{\xi B}\right\}\!\D\xi\\
  &&\mbox{}+\frac14 \int_0^t (t^2-\xi^2) \ee^{(t-\xi)(A+B)} \int_0^\xi \!\left\{ \ee^{(t-\eta)A}[A,B]\ee^{\eta A}\D\eta [A,B]\ee^{\xi B}\right.\\
  &&\hspace*{30pt}\left.\mbox{} +\ee^{(\xi-\eta)B}[A,B]\ee^{\eta B}\D\eta [A,B]\ee^{\xi A}\right\}\!\D\eta,
\end{Eqnarray*}
we deduce that 
\begin{Eqnarray}
   \nonumber
  &&E_{\PLT}(A,B;t)\\
  \nonumber
  &=&\frac14 \int_0^t (t^2-\xi^2)\ee^{(t-\xi)(A+B)}\! \left\{ \ee^{\xi B}[A,[A,B]]\ee^{\xi A}-\ee^{\xi A}[B,[A,B]]\ee^{\xi B}\right\}\!\D\xi\hspace*{20pt}\\
  \label{eq:2.5} 
  &&\mbox{}-\frac12 \int_0^t \ee^{(t-\tau)(A+B)} \int_0^\tau (\tau-\xi)\left\{ \ee^{(\tau-\xi)A}[A,[A,B]]\ee^{\xi A}\ee^{\tau B} \right.\\
  \nonumber
  &&\hspace*{20pt}\left.\mbox{}-\ee^{(\tau-\xi)B}[B,[A,B]]\ee^{\xi B}\ee^{\tau A}\right\}\!\D\xi\D\tau\\
  \nonumber
  &&\mbox{}+\frac14 \int_0^t (t^2-\xi^2) \ee^{(t-\xi)(A+B)} \int_0^\xi \!\left\{ \ee^{(\xi-\eta)A}[A,B]\ee^{\eta A}\D\eta [A,B]\ee^{\xi B}\right.\\
  \nonumber
  &&\hspace*{30pt}\left.\mbox{} +\ee^{(\xi-\eta)B}[A,B]\ee^{\eta B}\D\eta [A,B]\ee^{\xi A}\right\}\!\D\xi.
\end{Eqnarray}
The first two integrals are $\O{t^3}$ and the last one $\O{t^4}$.

\subsection{The Strang splitting}

Recalling from \R{eq:1.5} that
\begin{displaymath}
  Y_{\St}(A,B;t)=Y_{\LT}(A,B;\tfrac12 t)Y_{\LT}(B,A;\tfrac12 t),
\end{displaymath}
we have
\begin{Eqnarray*}
  &&E_{\St}(A,B;t)=Y_{\St}(A,B;t)-\ee^{t(A+B)}\\
  &=&\{E_{\LT}(A,B;\tfrac12 t)+\ee^{\frac12t(A+B)}\}\{E_{\LT}(B,A;\tfrac12 t)+\ee^{\frac12 t(A+B)}\} -\ee^{t(A+B)}\\
  &=&E_{\LT}(A,B;\tfrac12 t) \ee^{\frac12 t(A+B)}+\ee^{\frac12 t(A+B)} E_{\LT}(B,A;\tfrac12 t) \\
  &&\mbox{}+E_{\LT}(A,B;\tfrac12 t)\cdot E_{\LT}(B,A;\tfrac12 t).
\end{Eqnarray*}
While it is tempting (and possible!) to construct $E_{\St}$ from $E_{\LT}$ in this manner, it is simpler to proceed from first principles.

\begin{theorem}
  The error of Strang's splitting is
  \begin{Eqnarray}
  \label{eq:2.6}
  E_{\St}(A,B;t)&=&-\frac14 \int_0^t \ee^{(t-\tau)(A+B)} \ee^{\frac12\tau A}\int_0^\tau\! (\tau-\eta) \left\{ \ee^{-\frac12\eta A}[A,[A,B]]\ee^{\frac12\eta A}\right.\\
  \nonumber
  &&\hspace*{20pt}\left.\mbox{}+2\ee^{\eta B}[B,[A,B]]\ee^{-\eta B}\right\}\!\D\eta\,\ee^{\tau B}\ee^{\frac12 \tau A}\D\tau.
\end{Eqnarray}
\end{theorem}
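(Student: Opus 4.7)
The plan is to mimic the strategy used for $E_{\LT}$: derive a perturbed first-order linear ODE satisfied by $Y_{\St}$, apply the Duhamel principle to express $E_{\St}$ as a single integral of the forcing term against $\ee^{(t-\tau)(A+B)}$, and then simplify the forcing by iterated applications of \R{eq:2.2} until the double commutators $[A,[A,B]]$ and $[B,[A,B]]$ appear explicitly.

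First I would differentiate $Y_{\St}(A,B;t)=\ee^{\frac12 tA}\ee^{tB}\ee^{\frac12 tA}$ by the product rule (three summands) and subtract $(A+B)Y_{\St}$. Since $A$ commutes with $\ee^{\frac12 tA}$ and $B$ commutes with $\ee^{tB}$, the difference telescopes to
\begin{displaymath}
G(t):=\frac{\D Y_{\St}}{\D t}-(A+B)Y_{\St}=-\tfrac12\ee^{\frac12 tA}[A,\ee^{tB}]\ee^{\frac12 tA}+[\ee^{\frac12 tA},B]\ee^{tB}\ee^{\frac12 tA},
\end{displaymath}
while $Y_{\St}(A,B;0)=I$, so that Duhamel gives $E_{\St}(A,B;t)=\int_0^t\ee^{(t-\tau)(A+B)}G(\tau)\D\tau$. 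Next I would factor $\ee^{\frac12\tau A}$ out on the left and $\ee^{\tau B}\ee^{\frac12\tau A}$ on the right of $G(\tau)$: inserting $\ee^{-\tau B}\ee^{\tau B}$ in the first summand and using the $B$-analogue of \R{eq:2.2} yields $[A,\ee^{\tau B}]\ee^{-\tau B}=A-\ee^{\tau B}A\ee^{-\tau B}=\int_0^\tau\ee^{\eta B}[A,B]\ee^{-\eta B}\D\eta$, while \R{eq:2.2} itself gives $\ee^{-\frac12\tau A}[\ee^{\frac12\tau A},B]=B-\ee^{-\frac12\tau A}B\ee^{\frac12\tau A}=\int_0^{\tau/2}\ee^{-\eta A}[A,B]\ee^{\eta A}\D\eta$. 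Assembling,
\begin{displaymath}
G(\tau)=\ee^{\frac12\tau A}\!\left\{\int_0^{\tau/2}\!\ee^{-\eta A}[A,B]\ee^{\eta A}\D\eta-\tfrac12\!\int_0^\tau\!\ee^{\eta B}[A,B]\ee^{-\eta B}\D\eta\right\}\!\ee^{\tau B}\ee^{\frac12\tau A}.
\end{displaymath}

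The key observation, and the only step that is not routine bookkeeping, is that the leading $[A,B]$ contributions of these two inner integrals cancel exactly: replacing each integrand by its value at $\eta=0$ yields $\tfrac{\tau}{2}[A,B]-\tfrac{\tau}{2}[A,B]=0$, which is the identity responsible for Strang being of order two. To expose what remains I would apply \R{eq:2.2} once more, writing $\ee^{-\eta A}[A,B]\ee^{\eta A}=[A,B]-\int_0^\eta\ee^{-\sigma A}[A,[A,B]]\ee^{\sigma A}\D\sigma$ and its $B$-counterpart $\ee^{\eta B}[A,B]\ee^{-\eta B}=[A,B]+\int_0^\eta\ee^{\sigma B}[B,[A,B]]\ee^{-\sigma B}\D\sigma$. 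The $[A,B]$ pieces cancel as noted, and Fubini converts each surviving double integral into $\int_0^a(a-\sigma)(\cdots)\D\sigma$ with $a=\tau/2$ and $a=\tau$ respectively; a linear change of variables $\sigma\mapsto\eta/2$ in the $[A,[A,B]]$ integral places both contributions on the common interval $[0,\tau]$ with weights $\tfrac14(\tau-\eta)$ and $\tfrac12(\tau-\eta)$. Inserting the resulting expression for $G(\tau)$ into $\int_0^t\ee^{(t-\tau)(A+B)}G(\tau)\D\tau$ reproduces \R{eq:2.6} exactly.

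I expect the only nontrivial point to be the cancellation of the $[A,B]$ contributions -- it is forced by the symmetry of the Strang factorisation and has no counterpart in the Lie--Trotter case -- together with careful bookkeeping of the factors of two arising from the half-step in the $A$-exponentials versus the full step in $B$.
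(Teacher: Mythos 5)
Your proposal is correct and follows essentially the same route as the paper: derive the perturbed ODE for $Y_{\St}$, apply Duhamel once, and then reduce the inner commutators to nested integrals of $[A,[A,B]]$ and $[B,[A,B]]$ with a Fubini exchange at the end. The only differences are cosmetic -- you expand $[\ee^{\frac12\tau A},B]$ and $[A,\ee^{\tau B}]$ by integrating the derivative of a conjugation (the device of \R{eq:2.2}) where the paper applies Duhamel a second time, and your explicit cancellation of the two $[A,B]$ terms is exactly what the paper encodes via $\tilde{R}_{[A,B]}(0)=O$.
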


\begin{proof}
Similarly to the proof of Theorem~\ref{th:LieTrotter},
\begin{eqnarray*}
  &&Y_{\St}(A,B;t)=\ee^{\frac12 tA}\ee^{tB}\ee^{\frac12 tA}\\
  \Rightarrow && \frac{\D Y_{\St}(A,B;t)}{\D t}-(A+B)Y_{\St}(A,B;t)=[\ee^{\frac12 tA},B]\ee^{tB}\ee^{\frac12 tA}+\frac12 \ee^{\frac12 tA}[\ee^{tB},A]\ee^{\frac12 tA}\\
  \Rightarrow && E_{\St}(A,B;t)=\int_0^t \ee^{(t-\tau)(A+B)}[\ee^{\frac12\tau A},B]\ee^{\tau B}\ee^{\frac12\tau A}\D\tau\\
  &&\hspace*{80pt}\mbox{}-\frac12 \int_0^t \ee^{(t-\tau)(A+B)}\ee^{\frac12\tau A} [A,\ee^{\tau B}]\ee^{\frac12\tau A}\D\tau.
\end{eqnarray*}
Let
\begin{displaymath}
  Z_1(\tau)=[\ee^{\frac12\tau A},B],\qquad Z_2(\tau)=[A,\ee^{\tau B}].
\end{displaymath}
Then $Z_1(0)=Z_2(0)=O$ and by the magic of Duhamel
\begin{eqnarray*}
  Z_1-\frac12 AZ_1=\frac12[A,B]\ee^{\frac12\tau A}&\Rightarrow&Z_1(\tau)=\frac12 \int_0^\tau \ee^{\frac12(\tau-\xi)A}[A,B]\ee^{\frac12\xi A}\D\xi,\\
  Z_2'-Z_2B=\ee^{\tau B}[A,B]&\Rightarrow&Z_2(\tau)=\int_0^\tau \ee^{\xi B}[A,B]\ee^{(\tau-\xi)B}\D\xi.
\end{eqnarray*}
Therefore
\begin{Eqnarray*}
  &&E_{\St}(A,B;t)\\
  &=&\frac12 \int_0^t \ee^{(t-\tau)(A+B)} \ee^{\frac12\tau A} \int_0^\tau \ee^{-\frac12\xi A}[A,B]\ee^{\frac12\xi A}\D\xi \ee^{\tau B}\ee^{\frac12\tau A}\D\tau\\
  &&\mbox{}-\frac12 \int_0^t \ee^{(t-\tau)(A+B)}\ee^{\frac12\tau A}\int_0^\tau \ee^{\xi B}[A,B]\ee^{-\xi B}\D\xi \ee^{\tau B}\ee^{\frac12\tau A}\D\tau\\
  &=&\frac12\int_0^t \ee^{(t-\tau)(A+B)}\ee^{\frac12\tau A}\int_0^\tau \!\left\{ \ee^{-\frac12\xi A}[A,B]\ee^{\frac12\xi A}-\ee^{\xi B}[A,B]\ee^{-\xi B}\right\} \!\ee^{\frac12\tau A}\D\tau.
\end{Eqnarray*}
Recalling the operator $R_Q$ from the previous section, we let
\begin{displaymath}
  \tilde{R}_Q(\xi)=\ee^{-\frac12 \xi A}Q\ee^{\frac12\xi A}-\ee^{\xi B}Q\ee^{-\xi B}.
\end{displaymath}
Since $\tilde{R}_Q(0)=O$, we have
\begin{displaymath}
  \tilde{R}_Q(\xi)=\int_0^\xi \tilde{R}_Q'(\eta)\D\eta=-\frac12 \int_0^\xi\! \left\{ \ee^{-\frac12\eta A}[A,Q]\ee^{\frac12\eta A}+2\ee^{\eta B}[B,Q]\ee^{-\eta B}\right\}\!\D\eta.
\end{displaymath}
The theorem follows because
\begin{displaymath}
  E_{\St}(A,B;t)=\frac12\int_0^t \ee^{(t-\tau)(A+B)}\ee^{\frac12\tau A} \int_0^\tau \tilde{R}_{[A,B]}(\xi)\ee^{\frac12\tau A}\D\tau.
\end{displaymath}
We obtain 
\begin{Eqnarray*}
  E_{\St}(A,B;t)&=&-\frac14 \int_0^t \ee^{(t-\tau)(A+B)} \ee^{\frac12\tau A} \int_0^\tau  \int_0^\xi\! \left\{ \ee^{-\frac12\eta A}[A,[A,B]]\ee^{\frac12\eta A}\right.\\
  &&\hspace*{20pt}\left.\mbox{}+2\ee^{\eta B}[B,[A,B]]\ee^{-\eta B}\right\}\!\D\eta\D\xi\,\ee^{\tau B}\ee^{\frac12 \tau A}\D\tau
\end{Eqnarray*}
and, exchanging the two inner integrals, derive \R{eq:2.6}.
\end{proof}

Note that \R{eq:2.6} implies that $E_{\St}(A,B;t)=\O{t^3}$.

\section{Error bounds}

A standard means of bounding exponentials of matrices is by using a {\em logarithmic norm\/} \cite{soderlind2006tln}. Thus, let $A$ be a complex-valued square matrix, $A:\mathcal{Y}\rightarrow\mathcal{Y}$, where  $\mathcal{Y}$ is a normed space. The limit
\begin{equation}
  \label{eq:3.1}
  \mu[A]=\lim_{\varepsilon\downarrow0} \frac{\|I+\varepsilon A\|-1}{\varepsilon},
\end{equation}
where $\|\,\cdot\,\|$ is the norm of $\mathcal{Y}$, is the logarithmic norm of $A$. This concept is important because
\begin{equation}
  \label{eq:3.2}
  \|\ee^{tA}\|\leq \ee^{t\mu[A]},\qquad t\geq0,
\end{equation}
and $\mu[A]$ is the least number with this property. In particular, in the case of the standard Euclidean norm, $\mu[A]$ is the largest eigenvalue of the symmetric matrix $\frac12(A+A^*)$. Note that $\mu[A]$ might be negative: a logarithmic norm is not a norm!

The definition \R{eq:3.1} and the inequality \R{eq:3.2} survive once $\mathcal{Y}$ is replaced by a Hilbert space and $A$ is a linear operator \cite{soderlind2006tln}, while in the case of Euclidean norm $\mu[A]=\sup \sigma(\frac12(A+A^*))$, where $\sigma$ is the spectrum -- it is bounded once $A$ generates a strongly continuous semigroup and $\ee^{tA}$ is well posed. For many cases of interest $\mu[A]=0$ but we do not need to impose this condition. 

We let
\begin{displaymath}
  \omega=\mu[A]+\mu[B]-\mu[A+B].
\end{displaymath}
The error bounds are expressed in terms of $\omega$, rather than individual logarithmic norms, and this renders them substantially simpler.

\subsection{Lie--Trotter}

It follows at once from \R{eq:2.1} that
\begin{equation}
  \label{eq:3.3}
  \|E_{\LT}(A,B;t)\|\leq \ee^{t\mu[A+B]} \frac{1-(1-\omega t)\ee^{\omega t}}{\omega^2}\|[A,B]\|.
\end{equation}
The most ubiquitous case is $\mu[A]=\mu[B]=\mu[A+B]=0$ -- in that case letting $\omega\rightarrow0$ in \R{eq:3.3} results in $\|E_{\LT}(A,B;T)\|\leq \frac12 t^2 \|[A,B]\|$. This upper bound is stronger than the expression \R{eq:1.6}.

\subsection{Palindromic Lie--Trotter}

Palindromic Lie--Trotter splitting leads to a more complicated error bound,
\begin{Eqnarray*}
  &&\|E_{\PLT}(A,B;t)\|\\
  &\leq&\ee^{t\mu[A+B]}(1+\Frac12 \omega t) \frac{(1+\frac12\omega t)\ee^{-\omega t}-(1-\frac12 \omega t)}{\omega^3} (\|[A,[A,B]]\|+\|[B,[A,B]]\|)\\
  &&\mbox{}+\frac{3}{\omega^4}\left[\left(1+\omega t+\frac13\omega^2t^2\right)\ee^{-\omega t}-\left(1-\frac16 \omega^2 t^2\right)\right] \|[A,B]\|^2
\end{Eqnarray*}
and the error bound becomes
\begin{displaymath}
  \frac{t^3}{12} (\|[A,[A,B]]\|+\|[B,[A,B]]\|)+\frac{t^4}{8}  \|[A,B]\|^2
\end{displaymath}
once $\omega\rightarrow0$ -- compare with \R{eq:1.7}.

\subsection{Strang}

The Strang splitting lends itself to considerably neater error bound. Starting from \R{eq:2.6}, we have
\begin{Eqnarray*}
  &&\|E_{\St}(A,B;t)\|\\
  \nonumber
  &\leq&\frac14 \ee^{t\mu[A+B]} \frac{(1-\omega t+\frac12\omega^2t^2)\ee^{\omega t}-1}{\omega^3}(\|[A,[A,B]]\|+2\|[B,[A,B]]\|).
\end{Eqnarray*}
Once $\mu[A],\mu[B],\mu[A+B]=0$, thus $\omega\rightarrow0$, we recover
\begin{displaymath}
  \|E_{\St}(A,B;t)\|\leq \frac{t^3}{24} (\|[A,[A,B]]\|+2\|[B,[A,B]]\|)
\end{displaymath}
-- the leading error term in \R{eq:1.8} becomes an upper bound! 

\section{Conclusion}

Exponential splittings are an important strategy in computing `difficult' time-dependent partial differential equations by converting them into problems which are considerably easier to evaluate. While the procedure is elementary and intuitive, its analysis is difficult, not least because of the presence of unbounded operators. Typically, this calls for advanced methods of functional analysis and semigroup theory \cite{hansen09esu}. In this paper we demonstrate that this analysis can be accomplished by elementary means, accessible to advanced mathematics undergraduates. 

\section*{Acknowledgements}

The work of KK in this project has been supported by The National Center for Science (NCN), based on Grant No.\ 2019/34/E/ST1/00390.

\end{document}